\DeclareMathAlphabet{\mathpzc}{OT1}{pzc}{m}{it}
\numberwithin{equation}{section}
\theoremstyle{plain}
\newtheorem{lem}{Lemma}[section]
\newtheorem{lemma}[lem]{Lemma}
\newtheorem{thm}[lem]{Theorem}
\newtheorem{prop}[lem]{Proposition}
\newtheorem{cor}[lem]{Corollary}
\theoremstyle{definition}
\newtheorem{exa}[lem]{Example}
\newtheorem{rem}[lem]{Remark}
\newtheorem{defn}[lem]{Definition}
\newtheorem{Conj}[lem]{Conjecture}
\begin{document}

\baselineskip 16truept

\title{Beck's Conjecture for multiplicative lattices }

\author{Vinayak Joshi and Sachin Sarode}
\address{\rm Department of Mathematics, University of Pune,
Pune-411007, India.} \email{vvj@math.unipune.ac.in \\
sarodemaths@gmail.com}
\subjclass[2010]{Primary 05C15, Secondary
06A12} 
\date{October 12, 2013}

\maketitle



\begin{abstract}

In this paper, we  introduce the zero divisor graph of a
multiplicative lattice. We provide a counter-example to Beck's
conjecture for multiplicative lattices. Further, we prove that Beck's
conjecture is true for reduced multiplicative lattice which extends
the result of Behboodi and Rakeei \cite{br2} and Aalipour et. al.
\cite{ali}. 

\end{abstract}

\noindent{\bf Keywords} Zero-divisor graph, reduced multiplicative
lattice,  minimal prime element.

\section{Introduction}
In recent years lot of attention have been given to the study of
zero divisor graphs of algebraic structures and ordered
structures. The idea of a zero divisor graph of a commutative ring
with unity was introduced by Beck \cite{B}. He was particularly
interested in the coloring of commutative rings with unity. Many
mathematicians like Anderson et.al. \cite{AN},  F. DeMeyer, T.
McKenzie and K. Schneider \cite{DMS}, Maimani, Pournaki and
Yassemi \cite{mpy}, Redmond \cite{red} and Samei \cite{sam}
investigated the interplay between   properties of the algebraic
structure and graph theoretic properties.

The zero divisor graphs of ordered structures are well studied by
Hala\v{s} and Jukl \cite{hj}, Hala\v{s} and L\"anger \cite{hl},
Joshi \cite{j}, Joshi et.al. \cite{ja, jaa, jwp, jwp1}, Nimbhorkar
et.al \cite{nwl} etc.

In ring theory, the structure of a ring $R$ is closely related to
ideal's behavior more than elements. Hence Behboodi and Rakeei
\cite{br1, br2} introduced the concept of annihilating ideal-graph
$\mathbb{AG}(R)$ of a commutative ring $R$  with unity where the
vertex set $V(\mathbb{AG}(R))$ is the set of non-zero ideals with
non-zero annihilator, that is, for a non-zero  ideal $I$ of  $R$,
$I\in V(\mathbb{AG}(R))$ if and only if there exists a non-zero
ideal $J$ of $R$ such that $I  J=(0)$ and two distinct vertices
$I$ and $J$ are adjacent if and only if $IJ=(0)$ and studied the
properties of rings and its annihilating ideal-graphs. In \cite{br2}, Behboodi and Rakeei
raised the following conjecture.

\begin{Conj} \textit{ For every commutative ring $R$ with
unity, $\chi(\mathbb{AG}(R))= Clique(\mathbb{AG}(R))$.}
\end{Conj}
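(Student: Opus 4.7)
The plan is the classical two-sided comparison. Since $\chi(\mathbb{AG}(R)) \ge \mathrm{Clique}(\mathbb{AG}(R))$ is automatic for any graph, the entire content is the reverse inequality $\chi(\mathbb{AG}(R)) \le \mathrm{Clique}(\mathbb{AG}(R))$. I would split according to whether the clique number is finite, reducing the infinite case to an anti-Ramsey style observation that any infinite pairwise-annihilating family of ideals already forces infinitely many minimal primes and hence infinitely many colors.

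Now suppose $\mathrm{Clique}(\mathbb{AG}(R)) = n < \infty$. The key structural claim I would target is $|\mathrm{Min}(R)| = n$, where $\mathrm{Min}(R) = \{P_1,\dots,P_n\}$ is the set of minimal prime ideals. Distinct minimal primes can be separated by ideals whose product is zero (for each $i$, pick an ideal $A_i$ contained in $\bigcap_{j\ne i} P_j$ but not in $P_i$; then $A_iA_j \subseteq \bigcap_k P_k = N(R)$ is nilpotent, and in the reduced part it vanishes). Conversely, a pairwise-annihilating family of size $k$ forces $k$ distinct minimal-prime supports because $IJ = (0)$ forbids $\mathrm{Ann}(I)$ and $\mathrm{Ann}(J)$ from being contained in a common minimal prime. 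With this in hand, the natural coloring is to send each vertex $I$ to the least index $i$ for which $I \not\subseteq P_i$; adjacency $IJ=(0)$ gives $I \subseteq \mathrm{Ann}(J) \subseteq \bigcap\{P \in \mathrm{Min}(R) : J \not\subseteq P\}$, and a short case analysis on the chosen least indices should force them to differ.

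The main obstacle is the non-reduced case: nilpotent elements permit distinct ideals $I, J$ with $IJ = (0)$ that both lie in every minimal prime, so the coloring above collapses on the nilradical. The repair I would attempt is to lift a valid coloring along the quotient map $R \twoheadrightarrow R/N(R)$ and then refine it by stratifying $N(R)$ according to nilpotence index, adding fresh colors only within each stratum. Controlling that these extra colors remain within the bound $\mathrm{Clique}(\mathbb{AG}(R))$ rather than overshooting it is the genuinely delicate point; presumably this is exactly where the paper's advertised counter-example intervenes in the lattice setting (where primary decomposition and the $\sqrt{\,}$-operation are unavailable), and a complete ring-theoretic proof would have to appeal to commutative-algebra tools with no purely lattice analog.
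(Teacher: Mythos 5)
You are being asked to compare against a proof that does not exist: the statement is Conjecture 1.1, which the paper quotes from Behboodi and Rakeei and never proves. Indeed, after Example \ref{non-red} exhibits a non-reduced multiplicative lattice with $\chi(\Gamma^m(L))=4>3=\mathrm{Clique}(\Gamma^m(L))$, Remark \ref{red} observes that this lattice is non-modular, hence not the ideal lattice of any commutative ring, and concludes that ``the above conjecture remains open.'' What the paper actually proves is the \emph{reduced} case: Theorem \ref{beck} and Theorem \ref{beck1} show that for a reduced multiplicative lattice with finite clique number one has $\chi(\Gamma^m(L))=\mathrm{Clique}(\Gamma^m(L))=n$, the number of minimal primes, and the final Corollary specializes this to reduced rings. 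Your reduced-case outline coincides with that argument: take the minimal primes $P_1,\dots,P_n$, get a clique of size $n$ from pairwise-annihilating ideals separated by the minimal primes (the paper does this via Lemmas \ref{6.1.}--\ref{6.6.}, using maximal annihilators $x_i^*$), and color each vertex $I$ by $\min\{i : I\not\subseteq P_i\}$, with adjacency $IJ=(0)\subseteq P_i$ plus primeness forcing distinct colors; this is verbatim the coloring $f(x)=\min\{i\;|\;x\nleq p_i\}$ in Theorem \ref{beck1}.

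The genuine gap is the non-reduced case, which is the entire content of the conjecture beyond what Behboodi--Rakeei had already established (their Corollary 2.11 is the reduced case). Your proposed repair --- lift a coloring along $R\twoheadrightarrow R/N(R)$ and add fresh colors stratum-by-stratum inside $N(R)$ --- cannot work as described. Adjacency in $\mathbb{AG}(R)$ is not determined modulo $N(R)$: two ideals with the same image in $R/N(R)$ may or may not satisfy $IJ=(0)$, so a coloring of $\mathbb{AG}(R/N(R))$ does not lift to anything meaningful on the vertices outside the nilradical, let alone inside it. Moreover, ideals contained in $N(R)$ are vertices adjacent to a great many others (if $N(R)^2=(0)$ they are pairwise adjacent and adjacent to every ideal annihilating $N(R)$), and the stratification by nilpotence index gives no mechanism bounding the number of new colors by $\mathrm{Clique}(\mathbb{AG}(R))$ --- which is precisely the inequality at stake. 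The paper's counter-example is exactly the warning that no argument of this shape can succeed: $\chi=\mathrm{Clique}$ is \emph{false} for general non-reduced multiplicative lattices, and since your proposal uses only structure visible in the multiplicative lattice $Id(R)$ (products, annihilators, minimal primes), any correct proof of the ring conjecture must invoke properties special to ideal lattices of rings (modularity, generation by principal ideals, actual ring elements), none of which your proposal supplies. So your proposal, like the paper, establishes only the reduced case; the conjecture itself remains unproved.
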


It is interesting to observe that the set $Id(R)$ of all ideals  of a
commutative ring $R$ with unity forms a modular compactly generated 1-compact  multiplicative lattice in which product of two compact element is compact
(see Definition \ref{1.1}) and the annihilating ideal-graph of a commutative
ring $R$ with unity is nothing but the zero divisor graph of the multiplicative
lattice  of all ideals of $R$ where the vertex set is the set of non-zero zero divisors
and vertices $a$ and $b$ are adjacent if and only if $ab=0$. Hence to study the annihilating ideal-graphs of commutative ring with unity, a multiplicative lattice becomes a tool. This
motivate us to define and study the zero divisor graph of a
multiplicative lattice. It is natural  to ask the following question and the affirmative answer to this question solves Conjecture 1.1. of Behboodi and Rakeei \cite{br2}.

\noindent{\bf Question:} {\it Is Beck's Conjecture, that is,
$\chi(\Gamma_i^m(L))=Clique(\Gamma_i^m(L))$ true for the zero
divisor graphs of a multiplicative lattice $L$ with respect to an
element $i$ of $L$?}\\

In this paper, we  introduce the zero divisor graph of a
multiplicative lattice. We provide a counter-example to Beck's
conjecture for multiplicative lattices, particularly, a
non-reduced multiplicative lattice. Further, we prove that Beck's
conjecture is true for reduced multiplicative lattice which extend
the result of Behboodi and Rakeei \cite{br2} and Aalipour et. al.
\cite{ali}. 

   Now, we begin with necessary concepts and terminology.

\begin{defn} \label{1.1}  A non-empty subset $I$ of a lattice $L$ is said to be \textit{semi-ideal}, if $x \leq a\in I$ implies that $x\in I$. A semi-ideal $I$ of $L$ is said to be an \textit{ideal } if for
$a,b \in I$, $a \vee b \in I$. A proper ideal (semi-ideal) $I$ of
a lattice $L$ is said to be \textit{prime} if $a\wedge b \in I$
implies $a \in I $ or $b \in I$. Dually, we have concept of prime
filter (semi-filter). A prime ideal (semi-ideal) $I$  is a
\textit{minimal prime ideal (semi-ideal)} if there is no prime
ideal (semi-ideal) $Q$ such that $ \{0\} \subsetneqq Q \subsetneqq
I$. A filter is said to be \textit{maximal} if it is a maximal
element of the poset of filter.

For $a\in L$, the set $(a]=\{x \in L~|~ x \leq a\}$ is called the
\textit{principal ideal generated by a}. Dually, we have a concept
of a principal filter $[a)$ generated by $a$.

A lattice $L$ is said to be \textit{complete}, if for any subset
$S$ of $L$, we have $\bigvee S,\bigwedge S\in L$.

A complete lattice $L$ is said to be a \textit{multiplicative
lattice}, if there is defined a binary operation $``\cdot"$ called
multiplication on $L$ satisfying the following conditions:
\begin{enumerate} \item $a \cdot b=b \cdot a$, for all $a,b\in L$, \item
$a \cdot(b \cdot c)=(a \cdot b)\cdot c$, for all $a,b,c\in L$,
\item $a \cdot(\vee_{\alpha} b_{\alpha})=\vee_{\alpha}(a \cdot
b_{\alpha})$, for all $a,b_{\alpha}\in L$, \item $a \cdot b \leq a
\wedge b$ for all $a,b\in L$, \item $a \cdot 1=a$, for all $a\in
L$.
\end{enumerate}

An element $ c $ of a complete lattice $ L $ is said to be
\textit{compact}, if $ c \leq \bigvee_{\alpha} a_{\alpha} $
implies that $ c \leq \bigvee _{i=1}^{n} a_{\alpha_{i}} $, where $
n \in \mathbb{Z}^{+} $. The set of all compact elements of a
lattice $ L $ is denoted by $ L_{*} $. A lattice $L$ is said to be
\textit{compactly generated} or \textit{algebraic}, if for every $
x \in L $, there exist $ x_{\alpha} \in L_{*} $, $\alpha \in
\Lambda$ such that $ x = \vee_{\alpha} x_{\alpha} $, that is,
every element is a join of compact elements.

   A multiplicative lattice $L$ is said to be \textit{$1$-compact } if $1$ is a compact element of $L$. A multiplicative lattice $L$ is said
to be \textit{compact } if every element is a compact element of
$L$.

 An element $p \neq 1$ of a multiplicative lattice $L$ is said to be \textit{prime} if $a
\cdot b \leq p$ implies either $a \leq p$ or $b \leq p$.

Equivalently, an element $p \neq 1$ of a $1$-compact, compactly
generated lattice $L$ is said to be \textit{prime} if $a \cdot b
\leq p$ for $a, b \in L_{*}$ implies either $a \leq p$ or $b \leq
p$.

A nonempty subset $S$ of $L_{*}$ in $1$-compact, compactly
generated lattice is said to be \textit{multiplicatively closed }
if $s_{1},~s_{2} \in S$, then $s_{1} \cdot s_{2} \in S$.

As $L$ is a complete lattice,  it follows that $L$ admits
residuals: for every pair $a, ~b \in L$, there exists an element
$(a:b) = \bigvee \{x \;|\;  x\cdot b \leq a\} \in L$ such that for
any $x \in L$ , $x \cdot b \leq a \Leftrightarrow x \leq (a:b)$.
Clearly, $a \leq (a:b)$ for all $a,~ b \in L$.

 In a multiplicative lattice $L$, an element $a \in L$  is
said to be \textit{nilpotent}, if $a^{n}= 0$, for some $ n \in
\mathbb{Z}^{+}$ and  $L$ is said to be \textit{reduced}, if the
only nilpotent element is 0.

Let $a$ be an element of a multiplicative lattice then we define
$a^*=\bigvee\{x \in L \;|\; a^n \cdot x=0\}$ and if $L$ is
reduced, then $a^*=\bigvee\{x \in L \;|\; x \cdot a=0\}$.

 A lattice $L$ with 0 is said to be \textit{0-distributive} if $a
\wedge b=0=a\wedge c$ then $a\wedge (b \vee c)=0$; see Varlet
\cite{var}. The concept of 0-distributive poset can be found in
\cite{jw, jm}.
\end{defn}

\section{Zero-divisor graph of a multiplicative lattice}

Joshi \cite{j} introduced the zero-divisor graph of a poset with
respect to an ideal $I$. We mentioned this definition, when a poset is a lattice.

\begin{defn}\label{2.1.} Let $I$ be an ideal of a  lattice $L$. We associate an undirected and simple graph,
called the {\it zero-divisor graph of $L$ with respect to $I$},
denoted by $\Gamma_I(L)$ in which the set of vertices is
$\{x\not\in I \;|\; x \wedge y \in I$ for some $y\not\in I\}$ and
two distinct vertices $a,b$ are adjacent if and only if $a \wedge
b \in I$.
\end{defn}

We illustrate this concept with an example.

\begin{exa}\label{2.2} The lattice  $L$ and its zero divisor graph $\Gamma_{\{0\}}(L)$ (in the sense of
Joshi \cite{j}) is shown below.

\begin{figure}[h]
\unitlength .61mm 
\linethickness{0.4pt}
\ifx\plotpoint\undefined\newsavebox{\plotpoint}\fi 
\begin{picture}(108.266,80.581)(15,10)
\put(33.5,56.581){\circle{3.5}} \put(42.54,71.192){\circle{3.5}}
\put(49.372,47.804){\circle{3.5}} \put(43.25,83.581){\circle{3.5}}
\put(33.5,39.331){\circle{3.5}} \put(42.25,30.081){\circle{3.5}}
\put(42.25,73.831){\line(0,-1){.25}}
\put(41,70.331){\line(-2,-3){8}}
\put(42.75,81.831){\line(0,-1){8.5}}
\put(42.75,73.581){\line(0,-1){.5}}
\put(55.25,48.081){\makebox(0,0)[rc]{c}}
\put(42,24.331){\makebox(0,0)[cb]{0}}
\put(37.5,72.331){\makebox(0,0)[lc]{$d$}}
\put(42.75,90.581){\makebox(0,0)[ct]{1}}
\multiput(43.379,31.469)(.047347107,.121181818){121}{\line(0,1){.121181818}}
\put(33.393,54.751){\line(0,-1){13.612}}
\put(33.393,41.192){\line(0,-1){.053}}
\multiput(34.234,37.775)(.048395683,-.047258993){139}{\line(1,0){.048395683}}
\multiput(43.379,69.729)(.0473,-.168175){120}{\line(0,-1){.168175}}
\put(71.812,31.416){\circle{3.784}}
\put(83.164,48.444){\circle{3.784}}
\put(65.4,48.497){\circle{3.784}}
\multiput(65.19,46.658)(.047211864,-.114915254){118}{\line(0,-1){.114915254}}
\multiput(82.218,46.815)(-.04748731,-.070162437){197}{\line(0,-1){.070162437}}
\put(87.947,48.55){\makebox(0,0)[rc]{$b$}}
\put(71.444,26.581){\makebox(0,0)[cb]{$c$}}
\put(60.879,48.55){\makebox(0,0)[lc]{$a$}}
\put(41.444,17.74){\makebox(0,0)[cb]{$L$}}
\put(39.444,7.5){\makebox(0,0)[cb]{$(a)$}}
\put(71.444,17.74){\makebox(0,0)[cb]{$\Gamma_{\{0\}}(L)$}}
\put(72.444,7.5){\makebox(0,0)[cb]{$(b)$}}
\put(121.516,28.749){\circle{3.5}} \put(99.189,48){\circle{3.354}}
\put(121.439,48){\circle{3.354}}
\put(99.689,28.75){\circle{3.354}}
\put(119.939,47.25){\line(0,1){.25}}
\put(98.939,23.5){\makebox(0,0)[cb]{$a$}}
\put(120.939,23.5){\makebox(0,0)[cb]{$b$}}
\put(121.439,54.25){\makebox(0,0)[ct]{$c$}}
\put(98.689,53){\makebox(0,0)[ct]{$d$}}
\multiput(100.387,46.838)(.0556396648,-.0474189944){358}{\line(1,0){.0556396648}}
\put(100.755,48.467){\line(1,0){19.025}}
\put(119.465,48.467){\line(1,0){.368}}
\put(99.126,46.365){\line(0,-1){15.977}}
\put(99.126,30.44){\line(0,-1){.105}}
\put(121.199,30.44){\line(0,1){15.977}}
\multiput(120.148,46.995)(-.0515582656,-.0474281843){369}{\line(-1,0){.0515582656}}
\put(101.386,28.443){\line(1,0){18.5}}
\put(101.491,28.391){\line(-1,0){.158}}
\put(108.939,17.5){\makebox(0,0)[cb]{$\Gamma^m(L)$}}
\put(110.939,7.5){\makebox(0,0)[cb]{$(c)$}}
\put(70.939,.5){\makebox(0,0)[cb]{\bf Figure 2}}
\put(29.5,39.25){\makebox(0,0)[cc]{$a$}}
\put(29.75,57.75){\makebox(0,0)[cc]{$b$}}
\end{picture}
\end{figure}

\end{exa}

Now, we introduced the zero-divisor graph $\Gamma^m(L)$ of a
multiplicative lattice $L$ and illustrate with an example.

\begin{defn}\label{2.1.} Let $L$ be a multiplicative lattice and let $i \in L$. We associate an undirected and simple graph, called the {\it
zero-divisor graph of $L$ with respect to an element i}, denoted by $\Gamma^m_i(L)$ in which the
set of vertices is  $\{x(\nleq i)\in L \;|\; x\cdot y \leq i$ for some
$y(\nleq i)\in L\}$ and two distinct vertices $a,b$ are adjacent if
and only if $a \cdot b \leq i$. Whenever $i=0$, we denote $\Gamma^m_i(L)$ by simply $\Gamma^m(L)$.
\end{defn}

\begin{exa}
Consider the same lattice $L$ shown in Figure 2$(a)$ with the
trivial multiplication $x \cdot y =0=y \cdot x$, for each $x \neq
1 \neq y$ and $x \cdot 1 =x=1 \cdot x$ for every $x \in L$. Then
it is easy to see that $L$ is a multiplicative lattice. Further,
it's zero divisor graph $\Gamma^m(L)$ (in the multiplicative
lattice sense) is shown in Figure 2$(c)$. It is interesting to
note that if 1 is completely join-irreducible({\it i.e.}
$1=\bigvee x_i \Rightarrow 1=x_i$ for some $i$) then any lattice
with this trivial multiplication is a multiplicative lattice.

\end{exa}

\begin{defn}
The \textit{chromatic number} of $G$ is denoted by $\chi(G)$.
Thus, $\chi(G)$ is the minimum number of colors which can be
assigned to the elements of $G$ such that adjacent elements
receive different colors.  A \emph{clique} of a graph $G$ is a
complete subgraph and the supremum of the sizes of clique in $G$,
denoted by $\omega(G)$, is called the \emph{clique number} of $G$.
\end{defn}
For undefined concepts in lattices and graphs, see Gr\"atzer
\cite{Gg} and Harary \cite{H} respectively.

It is known that Beck's Conjecture, that is,
$\chi(\Gamma_I(P))=Clique(\Gamma_I(P))$ is true for the zero
divisor graph of a poset $P$ (with 0) with respect to an ideal $I$
of $P$; see \cite[Theorem 2.9]{j} (when $I=\{0\}$); see also
\cite[Theorem 2.13]{hj}). Hence it is natural to ask the following
question.\\

\noindent{\bf Question:} {\it Is Beck's Conjecture, that is,
$\chi(\Gamma_i^m(L))=Clique(\Gamma_i^m(L))$ true for the zero
divisor graphs of a multiplicative lattice $L$ with respect to an
element $i$ of $L$?}\\

We answer this question negatively in the following example.

\begin{exa}\label{non-red}

Consider the lattice $L$ depicted in the Figure 3$(a)$. Define a
multiplication on $L$ as follows. It is not very difficult to
prove that $L$ is a multiplicative lattice. Moreover, $f^2=0$ for
$f\not=0$ shows that $L$ is non-reduced. Now consider the zero
divisor graph $\Gamma^m(L)$ of $L$ depicted in Figure 3$(b)$. It
is easy to see that $4=\chi(\Gamma^m(L))
>Clique(\Gamma^m(L))=3$. Thus Beck's Conjecture is not true in
the case of multiplicative lattices.

\begin{figure}[h] 
\unitlength .9mm 
\linethickness{0.4pt}
\ifx\plotpoint\undefined\newsavebox{\plotpoint}\fi 
\begin{picture}(105.758,78.947)(0,15)
\put(23.19,16.054){\circle{1.904}}
\put(23.19,42.96){\circle{1.904}}
\put(13.081,42.96){\circle{1.904}}
\put(13.081,58.122){\circle{1.904}}
\put(22.595,58.122){\circle{1.904}}
\put(32.406,58.122){\circle{1.904}}
\put(32.406,43.406){\circle{1.904}}
\put(41.92,58.122){\circle{1.904}}
\put(22.595,86.961){\circle{1.904}}
\put(23.041,25.271){\line(0,-1){8.324}}
\put(23.041,42.068){\line(0,-1){15.014}}
\multiput(23.041,27.054)(-.0374496124,.0587713178){258}{\line(0,1){.0587713178}}
\put(22.595,86.218){\line(0,-1){10.852}}
\multiput(5.5,59.312)(.0374187643,.0377574371){437}{\line(0,1){.0377574371}}
\multiput(23.338,75.217)(.0425217391,-.037416476){437}{\line(1,0){.0425217391}}
\put(22.446,73.879){\line(0,-1){15.162}}
\multiput(21.852,74.177)(-.037482906,-.064799145){234}{\line(0,-1){.064799145}}
\multiput(23.338,74.325)(.037482906,-.065431624){234}{\line(0,-1){.065431624}}
\put(42.068,57.231){\line(0,-1){12.933}}
\multiput(31.96,42.514)(-.037331839,-.069991031){223}{\line(0,-1){.069991031}}
\multiput(41.622,42.663)(-.0407868481,-.037414966){441}{\line(-1,0){.0407868481}}
\put(4.905,57.528){\line(0,-1){13.379}}
\multiput(5.351,42.366)(.0405431235,-.0374242424){429}{\line(1,0){.0405431235}}
\multiput(12.635,57.379)(-.037353846,-.070133333){195}{\line(0,-1){.070133333}}
\multiput(33.149,57.379)(.037333333,-.061086758){219}{\line(0,-1){.061086758}}
\multiput(5.5,57.825)(.0434987277,-.0374452926){393}{\line(1,0){.0434987277}}
\multiput(41.325,57.231)(-.045561039,-.0374545455){385}{\line(-1,0){.045561039}}
\multiput(21.852,57.528)(-.037471074,-.057739669){242}{\line(0,-1){.057739669}}
\put(23.19,57.677){\line(2,-3){8.919}}
\multiput(13.973,57.528)(.0460393701,-.0374566929){381}{\line(1,0){.0460393701}}
\multiput(31.663,57.379)(-.0477108753,-.0374588859){377}{\line(-1,0){.0477108753}}
\multiput(54.406,93.799)(-.037,.037){4}{\line(0,1){.037}}
\put(25.271,14.716){\makebox(0,0)[cc]{\tiny$0$}}
\put(24.973,87.853){\makebox(0,0)[cc]{\tiny$1$}}
\put(26.46,25.568){\makebox(0,0)[cc]{\tiny$f$}}
\put(1.635,42.811){\makebox(0,0)[cc]{\tiny$a$}}
\put(10.406,43.703){\makebox(0,0)[cc]{\tiny$b$}}
\put(19.176,43.703){\makebox(0,0)[cc]{\tiny$c$}}
\put(28.244,43.703){\makebox(0,0)[cc]{\tiny$d$}}
\put(38.501,43.852){\makebox(0,0)[cc]{\tiny$e$}}
\put(4.757,58.717){\circle*{1.904}}
\put(4.905,43.406){\circle*{1.88}}
\put(41.474,43.555){\circle*{1.808}}
\put(22.595,74.92){\circle*{2.081}}
\put(23.19,26.311){\circle*{1.904}}
\put(2.081,60.758){\makebox(0,0)[cc]{\tiny$a\vee c$}}
\put(11,60.204){\makebox(0,0)[cc]{\tiny$a\vee d$}}
\put(19.919,60.501){\makebox(0,0)[cc]{\tiny $b\vee d$}}
\put(28.838,60.055){\makebox(0,0)[cc]{\tiny$b\vee e$}}
\put(37,59.758){\makebox(0,0)[cc]{\tiny$c\vee e$}}
\put(24.527,77.298){\makebox(0,0)[cc]{\tiny$t$}}
\put(23.041,11.108){\makebox(0,0)[cc]{$L$}}
\put(82.947,64.663){\circle*{2.081}}
\put(92.164,56.041){\circle*{1.88}}
\put(92.312,44.893){\circle*{1.88}}
\put(73.879,55.893){\circle*{1.88}}
\put(74.028,44.744){\circle*{1.808}}
\put(83.393,52.771){\circle*{1.808}}
\multiput(82.799,64.812)(-.038086777,-.037471074){242}{\line(-1,0){.038086777}}
\put(73.731,56.041){\line(0,-1){11.595}}
\put(73.731,44.447){\line(1,0){18.581}}
\put(92.312,44.447){\line(0,1){11.892}}
\multiput(92.312,56.339)(-.04136087,.037482609){230}{\line(-1,0){.04136087}}
\multiput(83.096,52.771)(-.042663677,-.037327354){223}{\line(-1,0){.042663677}}
\put(82.799,64.812){\line(0,-1){12.041}}
\multiput(82.799,52.771)(-.10619048,.03716667){84}{\line(-1,0){.10619048}}
\multiput(92.461,56.487)(-.09217,-.03716){100}{\line(-1,0){.09217}}
\multiput(83.244,52.771)(.040730594,-.037333333){219}{\line(1,0){.040730594}}
\multiput(82.799,64.812)(.0374634146,.0380691057){246}{\line(0,1){.0380691057}}
\multiput(92.164,56.487)(.038052,-.03746){250}{\line(1,0){.038052}}
\put(92.164,45.19){\line(0,-1){11.595}}
\put(73.731,44.744){\line(-1,0){11}}
\put(73.879,55.893){\line(0,1){11.595}}
\put(92.312,74.474){\circle*{1.88}}
\put(101.677,46.974){\circle*{2.081}}
\put(92.312,33.744){\circle*{1.784}}
\put(62.879,44.595){\circle*{1.784}}
\put(73.879,67.339){\circle*{1.994}}
\multiput(92.61,74.474)(-.0374674797,-.0876178862){246}{\line(0,-1){.0876178862}}
\multiput(83.393,52.771)(.109577143,-.037377143){175}{\line(1,0){.109577143}}
\multiput(83.393,53.068)(.037331839,-.08532287){223}{\line(0,-1){.08532287}}
\multiput(62.582,44.595)(.09532287,.037331839){223}{\line(1,0){.09532287}}
\multiput(73.879,67.488)(.037478992,-.059962185){238}{\line(0,-1){.059962185}}
\put(83.096,52.92){\line(0,-1){18.135}}
\put(82.947,34.041){\circle*{1.994}}
\put(82.204,49.798){\makebox(0,0)[cc]{\tiny$f$}}
\put(84.136,20.811){\makebox(0,0)[cc]{$\Gamma^m(L)$}}
\put(82.136,15.811){\makebox(0,0)[cc]{\tiny (R)-Red,
(B)-Blue,(G)-Green, (W)-White}}
\put(87.623,56.001){\makebox(0,0)[cc]{\tiny(B) }}
\put(82.204,67.339){\makebox(0,0)[cc]{\tiny $a$ (R)}}
\put(95.353,58.717){\makebox(0,0)[cc]{\tiny$b$ (W)}}
\put(96.096,42.663){\makebox(0,0)[cc]{\tiny$ c$(R)}}
\put(71.947,41.92){\makebox(0,0)[cc]{\tiny$d$(W)}}
\put(70.461,57.528){\makebox(0,0)[cc]{\tiny$ e$ (G)}}
\put(92.758,77.744){\makebox(0,0)[cc]{\tiny$b\vee e$(W)}}
\put(105.758,49.352){\makebox(0,0)[cc]{\tiny$ a\vee c$(R)}}
\put(98.326,35.974){\makebox(0,0)[cc]{\tiny$ b\vee d$(W)}}
\put(80.271,32.406){\makebox(0,0)[cc]{\tiny$ t$(R)}}
\put(73.285,70.163){\makebox(0,0)[cc]{\tiny$ a\vee d$(R)}}
\put(61.987,47.568){\makebox(0,0)[cc]{\tiny$ c\vee e$(R)}}
\put(23.582,06.707){\makebox(0,0)[cc]{$(a)$}}
\put(83.664,06.801){\makebox(0,0)[cc]{$(b)$}}
\put(58.513,01.884){\makebox(0,0)[cc]{$\textbf{Figure ~ 3}$}}
\end{picture}

\end{figure}

\begin{tabular}{|c|c|c|c|c|c|c|c|c|c|c|c|c|c|c|}
  \hline
  $\bullet$& 0 & $a$ &$ b$ & $c$ & $d$ & $e$ & $f$ & $(a\vee c)$ & $(a \vee d)$& $(b \vee e)$ & $(c \vee e)$ & $(b \vee d)$ &$ t$ & $1$ \\
  \hline
  0& 0 & 0 & 0 & 0 & 0 & 0 & 0& 0 & 0 & 0 & 0 & 0 & 0 & 0 \\ \hline
 $ a$ & 0 & $f$ & 0 & $f$ & $f$ & 0 & 0 & $f$ & $f$ & 0 & $f$ & $f$ & $f$ & $a$
 \\  \hline
  $b$ & 0 & 0 & $f$ & 0 & $f$ & $f$ & 0 & 0 & $f$ & $f$ & $f$ & $f$ & $f$ & $b$
  \\  \hline
  $c$& 0 & $f$ & 0 & $f$ & 0 & $f$ & 0 & $f$ & $f$ & $f$ & $f$ & 0 & $f$ & $c$
  \\  \hline
  $d$ & 0 & $f$ & $f$ & 0 & $f$ & 0 & 0 & $f$ & $f$ & $f$ & 0 & $f$ & $f$ & $d$
  \\  \hline
  $e$ & 0 & 0 & $f$ & $f$ & 0 & $f$ & 0 & $f$ & 0 & $f$ & $f$ & $f$ & $f$ & $e$
  \\  \hline
  $f$ & 0 & 0 & 0 & 0 & 0 & 0 & 0 & 0 & 0 & 0 & 0 & 0 & 0 & $f$ \\
    \hline
  $(a \vee c)$ & 0 & $f$ & 0 & $f$ & $f$ & $f$ & 0 & $f$ & $f$ & $f$ & $f$ & $f$ & $f$ & $(a \vee c)$
  \\  \hline
  $(a \vee d)$ & 0 & $f$ & $f$ & $f$ & $f$ & 0 & 0 & $f$ & $f$ & $f$ & $f$ & $f$ & $f$ & $(a \vee d)$
  \\  \hline
  $(b \vee e)$ & 0 & 0 & $f$ & $f$ & $f$ & $f$ & 0 & $f$ & $f$ & $f$ & $f$ & $f$ & $f$ & $(b \vee e)$
  \\  \hline
  $(c\vee e)$ & 0 & $f$ & $f$ & $f$ & 0 & $f$ & 0 & $f$ & $f$ & $f$ & $f$ & $f$ & $f$ & $(c \vee e)$
  \\  \hline
  $(b \vee d)$ & 0 & $f$ & $f$ & 0 & $f$ & $f$ & 0 & $f$ & $f$ & $f$ & $f$ & $f$ & $f$ & $(b \vee d)$
  \\  \hline
  $t$ & 0 & $f$ & $f$ & $f$ & $f$ & $f$ & 0 & $f$ & $f$ & $f$ & $f$ & $f$ & $f$ &
  t\\  \hline
  $1$ & 0 & $a$ & $b$ & $c$ & $d$ & $e$ & $f$ & $(a\vee c)$ & $(a \vee d)$ & $(b \vee e)$ & $(c \vee e)$ & $(b \vee d)$ & $t$ & $1$
  \\  \hline

\end{tabular}\\

\end{exa}

\begin{rem}\label{red} If $R$ is a commutative ring with unity, then it is well known that the ideal lattice $Id(R)$ of $R$ is 1-compact, compactly generated modular
multiplicative lattice; see Anderson \cite{ddand}. Further, it is
easy to observe that if $R$ is reduced then $Id(R)$ is a reduced
multiplicative lattice. The lattice depicted in Figure $3(a)$ is a
non-modular lattice (as it contains a non-modular sublattice shown
in dark black circles) and hence it can not be an ideal lattice of
any commutative ring unity. Therefore the above conjecture remains
open though Beck's conjecture fails in the case of non-reduced
multiplicative lattices. We have more pleasant situation when a
multiplicative lattice is reduced. For this, we need  Theorem 2.9
of \cite{j}. Note that the notion of prime semi-ideals mentioned
in \cite{j} coincides with the corresponding notions in lattices.
Hence we quote essentially Theorem 2.9 of \cite{j}, when a poset
is a lattice and an ideal $I=\{0\}$.\end{rem}

\begin{thm}[Joshi \cite{j}] \label{2.9} Let $L$ be a lattice.  If $Clique(\Gamma(L)) < \infty$ then $L$ has a
finite number of minimal prime semi-ideals and if $n$ is this number
then $\chi(\Gamma(L)) =  Clique(\Gamma(L))  = n$.\end{thm}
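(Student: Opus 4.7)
The plan is to sandwich the number $n$ of minimal prime semi-ideals of $L$ between $\omega(\Gamma(L))$ and $\chi(\Gamma(L))$. Since $\omega(\Gamma(L))\leq \chi(\Gamma(L))$ is automatic, the chain $\omega(\Gamma(L))\leq \chi(\Gamma(L))\leq n \leq \omega(\Gamma(L))$ will simultaneously deliver both equalities and the finiteness of the family of minimal primes (from the bound $n\leq \omega(\Gamma(L))<\infty$).

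For the upper bound $\chi(\Gamma(L))\leq n$, assume there are finitely many minimal prime semi-ideals $P_{1},\dots,P_{n}$ and define a coloring by $c(x)=\min\{i : x\notin P_{i}\}$ on each vertex $x$ of $\Gamma(L)$. To see $c$ is well defined, note that $x$ is a vertex means $x\neq 0$ and $x\wedge y=0$ for some nonzero $y$; a routine Zorn argument on semi-filters disjoint from a suitable down-set produces a minimal prime semi-ideal that avoids $x$. If $a$ and $b$ are adjacent (so $a\wedge b=0$) and $c(a)=c(b)=i$, then $a\wedge b=0\in P_{i}$ while $a,b\notin P_{i}$, contradicting the primality of $P_{i}$. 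Hence $c$ is a proper coloring with at most $n$ colors.

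For the lower bound $n\leq \omega(\Gamma(L))$, I would build an $n$-clique. By minimality and primality, $\bigcap_{j\neq i}P_{j}\not\subseteq P_{i}$ (otherwise primality of $P_i$ forces $P_{j}\subseteq P_{i}$ for some $j\neq i$, violating incomparability of minimal primes). Choose $x_{i}\in\bigcap_{j\neq i}P_{j}\setminus P_{i}$. For $i\neq j$ and any index $k$, the meet $x_{i}\wedge x_{j}$ lies in $P_{k}$ (either $x_{i}$ or $x_{j}$ is already in $P_{k}$), so $x_{i}\wedge x_{j}\in\bigcap_{k}P_{k}$. The final step is to conclude $x_{i}\wedge x_{j}=0$, which rests on the lattice-theoretic fact that the intersection of all minimal prime semi-ideals of $L$ is $\{0\}$ (the semi-ideal analogue of Pawar–Thakare/Varlet for $0$-distributive lattices, derived once more from the Zorn argument above). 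This delivers a clique $\{x_{1},\dots,x_{n}\}$ of size $n$ in $\Gamma(L)$, provided every $x_{i}$ is actually a vertex — which it is, since $x_{i}\wedge x_{j}=0$ with $x_{j}\neq 0$. If instead the set of minimal primes is infinite, the same construction with any $n$ of them yields cliques of unbounded size, contradicting $\omega(\Gamma(L))=Clique(\Gamma(L))<\infty$; so finiteness of the family is forced.

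The main obstacle is the two lattice-theoretic facts underpinning both inequalities: (i) every vertex of $\Gamma(L)$ escapes some minimal prime semi-ideal, and (ii) the intersection of all minimal prime semi-ideals is $\{0\}$. Both are transparent in the $0$-distributive setting, but the statement does not hypothesize $0$-distributivity. I would therefore verify that the mere presence of vertices (i.e., of pairs $a\wedge b=0$ with both nonzero) already supplies the enrichment needed to run a Zorn-type separation argument — essentially, the semi-filter $[x)$ is disjoint from some down-set containing $0$ and closed under the relevant operations, which is enough to extract a minimal prime semi-ideal by standard transfinite induction. Once (i) and (ii) are in place, the sandwich above closes and the theorem follows.
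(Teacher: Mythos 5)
The paper does not actually prove this statement; it is imported from Joshi's Order paper \cite{j} (specialized to a lattice and $I=\{0\}$), so there is no in-house proof to compare against. Judged on its own merits, your sandwich $n\le\omega(\Gamma(L))\le\chi(\Gamma(L))\le n$ is the standard route, and most of it is sound. In particular, your worry about $0$-distributivity is unfounded: prime \emph{semi-ideals} are exactly the complements of proper filters (if $F$ is a filter and $a\wedge b\notin F$, then not both $a,b$ lie in $F$), so for any $x\neq 0$ the filter $[x)$ extends by Zorn's lemma to a filter maximal among those omitting $0$, whose complement is a minimal prime semi-ideal avoiding $x$. This yields both of your facts (i) and (ii) in an arbitrary lattice with $0$; it is only for prime \emph{ideals} that $0$-distributivity is needed, which is exactly the extra work the paper does separately in Theorem \ref{beck}. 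The coloring $c(x)=\min\{i: x\notin P_i\}$ is proper for the reason you give, and your construction of an $n$-clique is correct when $P_1,\dots,P_n$ is the \emph{entire} (finite) family of minimal primes.

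The genuine gap is in the finiteness step. If the family of minimal prime semi-ideals is infinite and you select $n$ of them, your elements $x_i\in\bigcap_{j\neq i}P_j\setminus P_i$ only satisfy $x_i\wedge x_j\in\bigcap_{k=1}^{n}P_k$, and this intersection need not be $\{0\}$: fact (ii) concerns the intersection of \emph{all} minimal primes, and the infinitely many omitted ones are not controlled, so ``the same construction with any $n$ of them'' does not produce a clique. The standard repair is to pass to complements: distinct minimal prime semi-ideals $P_i\neq P_j$ have distinct \emph{maximal} filters $F_i=L\setminus P_i$ and $F_j=L\setminus P_j$ as complements, so the filter generated by $F_i\cup F_j$ must contain $0$, producing $a_{ij}\in F_i$ and $b_{ij}\in F_j$ with $a_{ij}\wedge b_{ij}=0$. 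Setting $y_i=\bigwedge_{j\neq i}\bigl(a_{ij}\wedge b_{ji}\bigr)\in F_i$ gives nonzero elements with $y_i\wedge y_j\le a_{ij}\wedge b_{ij}=0$, hence an $n$-clique for \emph{any} $n$ distinct minimal primes. This simultaneously forces finiteness of the family and subsumes your lower bound $n\le\omega(\Gamma(L))$; with that replacement your argument closes.
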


\begin{lemma}\label{0-dist} Let $L$ be a reduced multiplicative lattice. Then $L$ is 0-distributive.\end{lemma}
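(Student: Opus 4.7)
The plan is to exploit the two defining features of a reduced multiplicative lattice in tandem: the distributivity of $\cdot$ over arbitrary joins (axiom (3) of Definition \ref{1.1}), together with the inequality $a \cdot b \leq a \wedge b$ (axiom (4)), and the fact that $0$ is the unique nilpotent element. The intended mechanism is that a meet which is forced to be small by hypothesis can be shown to be nilpotent via the multiplication, and then reducedness finishes the job.

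Concretely, assume $a \wedge b = 0 = a \wedge c$ and set $d := a \wedge (b \vee c)$; the goal is $d = 0$. First I would observe that multiplication is monotone in each coordinate: if $x \leq y$ then $y = x \vee y$, so $y \cdot z = (x \cdot z) \vee (y \cdot z) \geq x \cdot z$. Using this, together with $d \leq a$ and $d \leq b \vee c$, I would compute
\[
d^{2} \;=\; d \cdot d \;\leq\; a \cdot (b \vee c) \;=\; (a \cdot b) \vee (a \cdot c),
\]
where the last equality uses axiom (3). Now by axiom (4), $a \cdot b \leq a \wedge b = 0$ and $a \cdot c \leq a \wedge c = 0$, hence $a \cdot b = a \cdot c = 0$, and consequently $d^{2} = 0$.

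Finally, since $L$ is reduced, the only nilpotent element is $0$, so $d = 0$, which is exactly $a \wedge (b \vee c) = 0$. This establishes $0$-distributivity.

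There is no real obstacle here; the only point that has to be checked with a bit of care is the monotonicity of the product (used to pass from $d \cdot d$ to $a \cdot (b \vee c)$), but this follows immediately from the join-distributivity axiom as indicated above. Everything else is a direct application of the axioms of Definition \ref{1.1} and the definition of reducedness.
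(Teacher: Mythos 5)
Your proof is correct and follows essentially the same route as the paper: use axiom (4) to get $a\cdot b=a\cdot c=0$, axiom (3) to get $a\cdot(b\vee c)=0$, and reducedness to pass from a zero product back to a zero meet. Your write-up merely makes explicit (via $d^{2}\leq a\cdot(b\vee c)$ and the monotonicity of the product) the step the paper states as ``$a\cdot b=0$ implies $a\wedge b=0$ in a reduced lattice.''
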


\begin{proof} Let $a \wedge b=0=a \wedge c$ for $a,b,c \in L$. Since $a\cdot b \leq a \wedge b$ and $L$
is a multiplicative lattice, we have $a\cdot(b \vee c)=0$. Further,  $L$ is reduced, we have $a \cdot b=0$
 implies $a \wedge b=0$. This together with $a\cdot(b \vee c)=0$ proves that $L$ is 0-distributive.\end{proof}

It is proved in Joshi and Mundlik \cite{jm} that every prime
semi-ideal in a 0-distributive poset is a prime ideal of a
0-distributive poset. But for the sake of completeness, we provide
the proof of the same in the following result which is essential
for the proof of the Beck's Conjecture.

\begin{thm}\label{beck} Let $L$ be a reduced multiplicative lattice and let $Clique(\Gamma^m(L)) < \infty$.
Then $L$ has a finite number of minimal prime ideals and if $n$ is
this number then Beck's Conjecture is true, that is,
$\chi(\Gamma^m(L))=Clique(\Gamma^m(L))=n$. \end{thm}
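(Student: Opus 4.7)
The plan is to reduce the theorem to Theorem \ref{2.9} by establishing two identifications: first, that for a reduced multiplicative lattice $L$ the multiplicative graph $\Gamma^m(L)$ coincides with the meet-based graph $\Gamma(L)$ of Definition 2.1 (taken with $I=\{0\}$); and second, that the minimal prime semi-ideals of $L$ agree with its minimal prime ideals.

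The first key step I would establish is the equivalence $a\cdot b = 0 \iff a\wedge b = 0$ for all $a,b\in L$. The implication $(\Leftarrow)$ is immediate from axiom (4). For $(\Rightarrow)$, monotonicity of multiplication (a direct consequence of the join-distributivity in axiom (3), since $b=a\vee b$ when $a\leq b$) gives $(a\wedge b)^2\leq a\cdot b$, so if $a\cdot b=0$ then $(a\wedge b)^2=0$, and reducedness forces $a\wedge b=0$. This matches the vertex and edge sets of $\Gamma^m(L)$ and $\Gamma(L)$, so $\Gamma^m(L)=\Gamma(L)$ as graphs. Theorem \ref{2.9} then yields $\chi(\Gamma^m(L))=Clique(\Gamma^m(L))=n$, where $n$ is the (finite) number of minimal prime semi-ideals of $L$.

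To upgrade "semi-ideal" to "ideal" in this count, I would apply Lemma \ref{0-dist} to get 0-distributivity of $L$, and then invoke the Joshi--Mundlik fact that in a 0-distributive lattice every prime semi-ideal is already an ideal; a self-contained proof fits in a few lines and is worth including for completeness, as the remark preceding the theorem anticipates. This identifies the minimal prime semi-ideals of $L$ with its minimal prime ideals, which finishes the argument. The conceptual crux is the squaring identity $(a\wedge b)^2\leq a\cdot b$, which shows that in the reduced setting the potentially delicate distinction between multiplicative and meet-based zero-divisor graphs collapses; once this is in hand, the rest is bookkeeping over previously cited results, and no fresh coloring argument is needed.
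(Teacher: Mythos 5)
Your proposal is correct and follows essentially the same route as the paper: establish $a\cdot b=0\iff a\wedge b=0$ in the reduced setting so that $\Gamma^m(L)\cong\Gamma(L)$, use Lemma \ref{0-dist} plus the 0-distributivity argument to identify minimal prime semi-ideals with minimal prime ideals, and then invoke Theorem \ref{2.9}. The only differences are cosmetic: you spell out the squaring identity $(a\wedge b)^2\leq a\cdot b$ that the paper leaves as "one can easily prove," while the paper writes out the semi-ideal-to-ideal step in full (via maximality of the complementary filter) where you defer to the Joshi--Mundlik citation.
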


\begin{proof} Suppose $L$ is a reduced multiplicative lattice. Then one can
easily prove that whenever $a\cdot b=0$ then $a\wedge b=0$ and
conversely for $a, b\in L$.  By Lemma \ref{0-dist}, $L$ is
0-distributive.

Now, we prove that every minimal prime semi-ideal of $L$ is a
minimal prime ideal of $L$. Let $I$ be a minimal prime semi-ideal
of $L$. To prove $I$ is an ideal, it is enough to show that for
$a, b \in I$, $a\vee b \in I$. Let $a, b \in I$. Since $I$ is a
minimal prime semi-ideal of $L$, it is easy to observe that $L
\setminus I$ is a maximal filter of $L$. Further, $a, b\not\in L
\setminus I$, we have $[a) \vee (L\setminus I)= [b) \vee
(L\setminus I)=L=[0)$. Hence there exists $t \in L\setminus I$
such that $t \wedge a=0=t \wedge b$. By Lemma \ref{0-dist}, we
have $ t \wedge (a \vee b)=0$. This proves that $a\vee b \in  I$,
otherwise $0=t \wedge (a\vee b) \in L \setminus I$, a
contradiction to maximality of $L \setminus I$. This proves that
every minimal prime semi-ideal is a minimal prime ideal.

In view of the observation whenever $a\cdot b=0$ then $a\wedge
b=0$ and conversely for $a, b\in L$, the zero divisor graph
$\Gamma(L)$ of the lattice $L$ (in the lattice sense) is
isomorphic to the zero divisor graph $\Gamma^m(L)$ of the reduced
multiplicative lattice $L$. Since $Clique(\Gamma^m(L)) < \infty$,
we have $Clique(\Gamma(L)) < \infty$. Hence by Theorem \ref{2.9},
Beck's Conjecture is true for general lattices (in fact for
posets);  hence it is true for reduced multiplicative lattice,
that is, $\chi(\Gamma^m(L))=Clique(\Gamma^m(L))=n$, where $n$ is
the number of minimal prime ideals of $L$.
\end{proof}

\begin{rem}\label{red1}
It is obvious that the prime ideals in a commutative ring $R$ with
unity are nothing but the prime elements of the $Id(R)$. In view
of this observation and the fact that the annihilating ideal-graph
$\mathbb{AG}(R)$ of a commutative ring $R$ with 1 is nothing but
the zero divisor graph of a multiplicative lattice $Id(R)$  of all
ideals of a commutative ring $R$ with 1, Theorem \ref{beck} extend
Corollary 2.11  of Behboodi and Rakeei \cite{br2} but not
completely Theorem 8 of Aalipour et. al. \cite{ali}. In order to
extend Theorem 8 of Aalipour et. al. \cite{ali}, we have to prove
that $\chi(\Gamma^m(L))=Clique(\Gamma^m(L))=n$, where $n$ is the
number of minimal prime elements of a reduced multiplicative
lattice $L$. We achieve this result in sequel. Before proceeding
further, we provide an example of a reduced multiplicative lattice
which has prime ideals but not have any prime element.  It should
be noted that a reduced multiplicative lattice always has a prime
ideal but need not have a prime element. Let $\mathbb{N}$ be the
set of natural numbers. Let $L=\{X \subseteq \mathbb{N}~|~ |X| <
\infty\}\cup \{\mathbb{N}\}$. Then it is easy to see that $L$ is a
reduced multiplicative lattice with multiplication as the meet.
One can prove that the set $\{n\}^\bot=\{A \subseteq \mathbb{N}~|~
A \cap \{n\}=\emptyset \textrm{ and } |A|< \infty\}$ is a minimal
prime ideal of $L$ for every $n \in \mathbb{N}$. But $L$ does not
contain any prime element.

\end{rem}

\begin{lem}\label{6.1.} Let $L$ be reduced 1-compact, compactly generated lattice and
$ x \in L$ if $x^{*}$ is maximal among $\{a^{*} \;|\; a \in L,
a^{*} \neq 1 \}$, then $x^{*}$ is a prime element of $L$\end{lem}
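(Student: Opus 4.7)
The plan is to show directly that if $a\cdot b\le x^{*}$, then either $a\le x^{*}$ or $b\le x^{*}$. The key preliminary observation is that in a reduced multiplicative lattice one always has $x\cdot x^{*}=0$: since $x^{*}=\bigvee\{y\in L\mid x\cdot y=0\}$, the distributivity of the product over arbitrary joins (axiom (3)) gives $x\cdot x^{*}=\bigvee\{x\cdot y\mid xy=0\}=0$. Everything else in the argument is driven by playing this identity against the maximality hypothesis.

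Assume $a\cdot b\le x^{*}$ with $a\not\le x^{*}$; I want to deduce $b\le x^{*}$. Multiplying through by $x$ and using associativity together with $x\cdot x^{*}=0$, I get $(xa)\cdot b\le x\cdot x^{*}=0$, whence $b\le (xa)^{*}$. Next, whenever $x\cdot y=0$ one has $(xa)\cdot y=a\cdot(xy)=0$, so $x^{*}\le (xa)^{*}$. By the maximality hypothesis applied to the element $xa$, either $(xa)^{*}=1$ or $(xa)^{*}=x^{*}$. In the second case $b\le (xa)^{*}=x^{*}$ and we are finished, so it remains to eliminate the first case.

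This is the one place where $1$-compactness is essential. If $(xa)^{*}=1$, then $1=\bigvee\{y\in L\mid (xa)\cdot y=0\}$, and since $1$ is compact I can extract finitely many $y_{1},\ldots,y_{n}$ from this set with $y_{1}\vee\cdots\vee y_{n}\ge 1$, hence equal to $1$. Using axiom (3) once more,
\[
xa=xa\cdot 1=xa\cdot(y_{1}\vee\cdots\vee y_{n})=\bigvee_{i=1}^{n}(xa)\cdot y_{i}=0.
\]
Thus $xa=0$, which (by the definition of $x^{*}$) means $a\le x^{*}$, contradicting the assumption $a\not\le x^{*}$. So the case $(xa)^{*}=1$ cannot occur, and the proof is complete.

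The main obstacle, as just indicated, is the $(xa)^{*}=1$ step: without $1$-compactness one cannot pass from the supremum being $1$ to having an actual finite witness that annihilates $xa$. The rest is purely formal manipulation with residuation and the multiplication axioms. The reduced hypothesis enters only through the cleaner form $x^{*}=\bigvee\{y\mid xy=0\}$, which is exactly what is needed to conclude $x\cdot x^{*}=0$ at the outset; note that compactly-generatedness is not actually invoked in this particular lemma.
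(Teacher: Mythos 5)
Your proof is correct and follows essentially the same route as the paper's: the paper multiplies $a\cdot b\le x^{*}$ by $x$, picks a nonzero $y\le x\cdot a$, shows $x^{*}\le y^{*}\neq 1$ and invokes maximality to get $y^{*}=x^{*}$, whereas you run the identical comparison directly with $(xa)^{*}$ and dispose of the degenerate case $(xa)^{*}=1$ separately. One small remark: the appeal to $1$-compactness there is superfluous, since $(xa)\cdot(xa)^{*}=\bigvee\{(xa)\cdot y\mid (xa)\cdot y=0\}=0$ follows from axiom (3) alone (exactly as in your opening observation that $x\cdot x^{*}=0$), so $(xa)^{*}=1$ already forces $xa=xa\cdot 1=0$.
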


\begin{proof} Suppose that $a \cdot b \leq x^{*}$ and $a \nleq
x^{*}$. Then $(x \cdot a \cdot b)=0$. Let $(0 \neq y) \leq (x
\cdot a)$. Then $(b \cdot y) \leq (x \cdot a \cdot b)=0$. Thus $b
\leq y^{*}$. As $y \leq x$ implies $x^{*} \leq y^{*}$ and $y^{*}
\neq 1$ due to $y \neq 0$. By maximality of $x^{*}$, we deduced
that $y^{*}=x^{*}$, hence $ b \leq x^{*}$. This proves that
$x^{*}$ is prime.
\end{proof}

\begin{lem}\label{6.2.} Let $L$ be reduced $1$-compact, compactly generated lattice. If $x^{*}, y^{*}$ are distinct prime
elements of $L$, then $x \cdot y=0$.\end{lem}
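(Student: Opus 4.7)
The plan is to leverage the identity $x \cdot x^{*} = 0$ (and the symmetric $y \cdot y^{*} = 0$) and then use primality of $x^{*}$ and $y^{*}$ to close out a short case analysis on the four inequalities that primality produces.

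First I would establish that $x \cdot x^{*} = 0$. Since $L$ is reduced, the definition of $x^{*}$ simplifies to $x^{*} = \bigvee\{z \in L \mid x \cdot z = 0\}$, and then by the join-distributivity of multiplication (axiom (3) in Definition \ref{1.1}), $x \cdot x^{*} = \bigvee\{x \cdot z \mid x \cdot z = 0\} = 0$. The same reasoning gives $y \cdot y^{*} = 0$.

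Next I would apply primality twice. Since $x \cdot x^{*} = 0 \leq y^{*}$ and $y^{*}$ is prime, one of $x \leq y^{*}$ or $x^{*} \leq y^{*}$ must hold; symmetrically, one of $y \leq x^{*}$ or $y^{*} \leq x^{*}$ must hold. Of the four resulting combinations, the pair $x^{*} \leq y^{*}$ and $y^{*} \leq x^{*}$ forces $x^{*} = y^{*}$, contradicting distinctness. A mixed combination such as $x \leq y^{*}$ and $y^{*} \leq x^{*}$ yields $x \leq x^{*}$, hence $x \cdot x \leq x \cdot x^{*} = 0$; reducedness then forces $x = 0$, making $x^{*} = 1$ and contradicting primality of $x^{*}$. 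The mirror combination is ruled out the same way. This leaves only $x \leq y^{*}$ together with $y \leq x^{*}$, and then $x \cdot y \leq x \cdot x^{*} = 0$, as required.

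The main obstacle, apart from the bookkeeping of the case split, is being careful about \emph{where} reducedness is used: once to justify the simpler formula for $x^{*}$ that makes $x \cdot x^{*} = 0$ transparent, and once to pass from the nilpotent conclusion $x^{2} = 0$ to $x = 0$ when eliminating the degenerate cases. The hypotheses of $1$-compactness and compact generation do not seem to be needed for this particular argument, although they are built into the ambient setting in which $x^{*}$ is being studied.
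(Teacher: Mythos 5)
Your proof is correct. The key idea is the same as in the paper --- apply primality of $y^{*}$ to the annihilation relation coming from $x^{*}$, do the same with the roles of $x$ and $y$ swapped, and use distinctness of $x^{*}$ and $y^{*}$ to rule out the bad alternative --- but your execution differs in two respects. The paper argues by contradiction: assuming $x\cdot y\neq 0$ gives $x\nleq y^{*}$ at the outset, so primality applied to $x\cdot t=0\leq y^{*}$ for each \emph{compact} $t\leq x^{*}$ immediately yields $t\leq y^{*}$, and compact generation is then invoked to assemble these into $x^{*}\leq y^{*}$; symmetry gives $y^{*}\leq x^{*}$ and hence $x^{*}=y^{*}$, a contradiction. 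You instead apply primality once to the whole element, $x\cdot x^{*}=0\leq y^{*}$, which costs you a four-way case split but buys you two things: you never need the compactly generated (or $1$-compact) hypothesis, as you correctly observe, and your argument is direct rather than contrapositive. The price is that you must dispose of the degenerate combinations where $x\leq x^{*}$ or $y\leq y^{*}$, which you do correctly via $x^{2}=0\Rightarrow x=0\Rightarrow x^{*}=1$, contradicting primality (the paper's contrapositive setup sidesteps these cases entirely). Both arguments are valid; yours is slightly more general in its hypotheses, the paper's is shorter once the contradiction hypothesis is in place.
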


\begin{proof} Assume contrary that $x \cdot y \neq 0$, that is $x \nleq
y^{*}$ and $y \nleq x^{*}$. Consider a compact element $t \leq
x^{*}$. Then $x \cdot t=0$. As $y^{*}$ is prime and $x \cdot t
\leq y^{*}$. Since $L$ is compactly generated and every compact
element $t \leq x^{*}$ is also $\leq y^{*}$, we have $t \leq
y^{*}$ and hence $x^{*} \leq y^{*}$. Similarly we can show $y^{*}
\leq x^{*}$. Hence $y^{*}=x^{*}$, a contradiction.\end{proof}

\begin{lem}\label{6.3.} Let $L$ be reduced $1$-compact, compactly generated lattice  with $Clique(\Gamma^m(L)) < \infty$,
then the set $\{x^{*} \;|\; x \in L, x \neq 0\}$ satisfies the
ascending chain condition.\end{lem}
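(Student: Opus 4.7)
The plan is to argue by contradiction. Suppose that $\{x^{*} \mid x \in L,~ x \neq 0\}$ does not satisfy the ascending chain condition; then there is a strictly ascending chain
\begin{equation*}
x_1^{*} \;<\; x_2^{*} \;<\; x_3^{*} \;<\; \cdots ,
\end{equation*}
with $x_n \neq 0$ for every $n$. I intend to produce an infinite clique in $\Gamma^{m}(L)$ from this chain, contradicting the hypothesis $Clique(\Gamma^{m}(L)) < \infty$. The elements $x_n$ themselves are of little use because nothing forces them to be comparable in $L$; the first step is to replace the sequence $(x_n)$ by a descending sequence with the same pseudocomplements.

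Set $y_n = \bigvee_{k \geq n} x_k$, so that $y_1 \geq y_2 \geq y_3 \geq \cdots$. Using that $\cdot$ distributes over arbitrary joins (axiom~(3) of Definition~\ref{1.1}) and that $L$ is reduced, a routine verification gives
\begin{equation*}
y_n^{*} \;=\; \Bigl(\bigvee_{k \geq n} x_k\Bigr)^{*} \;=\; \bigwedge_{k \geq n} x_k^{*} \;=\; x_n^{*}.
\end{equation*}
Hence $y_1^{*} < y_2^{*} < \cdots$ is still a strict ascending chain, and $y_n \geq x_n \neq 0$, $y_n^{*} \neq 1$ for every $n$.

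For each $n$, since $y_n^{*} < y_{n+1}^{*}$ and $L$ is compactly generated, there is a compact element $c_n \in L_{*}$ with $c_n \leq y_{n+1}^{*}$ and $c_n \nleq y_n^{*}$ (if every compact element below $y_{n+1}^{*}$ were below $y_n^{*}$ their join $y_{n+1}^{*}$ would be below $y_n^{*}$). Equivalently, $c_n \cdot y_{n+1} = 0$ and $c_n \cdot y_n \neq 0$. Put $z_n = c_n \cdot y_n$, so $z_n \neq 0$. For $m > n$ the descending property gives $y_m \leq y_{n+1}$, so $c_n \cdot y_m \leq c_n \cdot y_{n+1} = 0$, and therefore
\begin{equation*}
z_n \cdot z_m \;=\; c_n \cdot c_m \cdot y_n \cdot y_m \;=\; c_m \cdot y_n \cdot (c_n \cdot y_m) \;=\; 0 .
\end{equation*}
If $z_n = z_m$ for some $n \neq m$, then $z_n^{2} = 0$, contradicting reducedness; thus the $z_n$ are pairwise distinct. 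Consequently $\{z_n \mid n \geq 1\}$ is an infinite clique in $\Gamma^{m}(L)$, the desired contradiction.

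The main obstacle is precisely the absence of order structure on an arbitrary sequence $(x_n)$ realising an ascending chain of pseudocomplements; the device that makes the argument go through is the replacement $y_n = \bigvee_{k \geq n} x_k$, converting the chain into one that comes from a descending sequence of elements. Once that is in place, compact witnesses of the strict steps $y_n^{*} < y_{n+1}^{*}$ together with the products $z_n = c_n \cdot y_n$ give the pairwise orthogonal family needed to build the infinite clique, and reducedness is invoked exactly to guarantee $c_n \nleq y_n^{*} \Rightarrow c_n \cdot y_n \neq 0$ and to exclude $z_n = z_m$.
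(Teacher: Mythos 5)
Your proof is correct, and it follows the same overall strategy as the paper: turn a strictly ascending chain of annihilators into an infinite family of pairwise-annihilating nonzero elements, hence an infinite clique, contradicting $Clique(\Gamma^m(L))<\infty$. Every step checks out; in particular the identity $\bigl(\bigvee_{k\ge n}x_k\bigr)^{*}=\bigwedge_{k\ge n}x_k^{*}=x_n^{*}$ does hold in a reduced multiplicative lattice by join-distributivity of the product. The difference is tactical. You treat the non-comparability of the $x_n$ as the main obstacle and remove it by passing to the descending tails $y_n=\bigvee_{k\ge n}x_k$; the paper shows that no comparability among the elements is needed, because it is already present among their stars. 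Given $a_1^{*}<a_2^{*}<\cdots$, the paper picks $x_j\le a_j^{*}$ with $x_j\nleq a_{j-1}^{*}$ and sets $y_j=x_j\cdot a_{j-1}\neq 0$; for $i<j$ orthogonality comes for free from $x_i\le a_i^{*}\le a_{j-1}^{*}$, which forces $x_i\cdot a_{j-1}=0$ and hence $y_i\cdot y_j=(x_i\cdot a_{j-1})\cdot(x_j\cdot a_{i-1})=0$. So your extra computation of $y_n^{*}$, and your appeal to compact generation to produce $c_n$, are avoidable (indeed $c_n=y_{n+1}^{*}$ itself would do, since $y_{n+1}^{*}\nleq y_n^{*}$); they buy nothing the shorter argument lacks, though your version does have the small merit of explicitly verifying that the clique members are pairwise distinct via reducedness, a point the paper leaves implicit.
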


\begin{proof} Suppose $a_{1}^{*} < a_{2}^{*} < a_{3}^{*} < a_{4}^{*} \cdot \cdot \cdot
\cdot$. Let $x_{j} \leq a_{j}^{*}$ and  $x_{j} \nleq a_{j-1}^{*}$,
$j=2, 3, \cdot \cdot \cdot$. If we let $y_{n}=(x_{n} \cdot
a_{n-1})$, $n=2, 3, \cdot \cdot \cdot$, then $y_{n} \neq 0$. For
$i < j$, we have $x_{i} \leq a_{i}^{*} \leq a_{j-1}^{*}$. Thus
$(x_{i} \cdot a_{j-1})=0$, consequently, $(y_{i} \cdot y_{j})=0$
for all $i \neq j$. Thus the set $\{y_{n} \;|\; n=2, 3, \cdot
\cdot \cdot \}$ is an infinite clique, a contradiction.\end{proof}

\begin{lem}\label{6.4.} Let $L$ be reduced $1$-compact, compactly generated lattice  with $Clique(\Gamma^m(L))< \infty$,
then the set of all distinct maximal annihilator elements of $L$
is finite.\end{lem}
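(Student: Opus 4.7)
The plan is to argue by contradiction, building an infinite clique in $\Gamma^m(L)$ from an infinite family of distinct maximal annihilators, and then invoking the hypothesis $\mathrm{Clique}(\Gamma^m(L))<\infty$.

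Suppose the conclusion fails, so there is an infinite family of pairwise distinct maximal annihilator elements
$$x_1^{*},\, x_2^{*},\, x_3^{*},\, \ldots,$$
i.e.\ each $x_i^{*}$ is maximal in $\{a^{*}\mid a\in L,\ a^{*}\neq 1\}$. By Lemma~\ref{6.1.}, each $x_i^{*}$ is a prime element of $L$; in particular $x_i^{*}\neq 1$ for every $i$. Since the $x_i^{*}$ are pairwise distinct primes, Lemma~\ref{6.2.} gives
$$x_i\cdot x_j=0\qquad\text{for all } i\neq j.$$

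I would next verify the three conditions needed to view $\{x_i\}$ as an infinite clique in $\Gamma^m(L)$. First, $x_i\neq 0$: for if $x_i=0$, then (working from the defining formula of $(\cdot)^{*}$ in the reduced case) $x_i^{*}=1$, contradicting $x_i^{*}\neq 1$. Second, the $x_i$ are pairwise distinct: if $x_i=x_j$ then $x_i^{*}=x_j^{*}$, contradicting distinctness of the annihilators. Third, each $x_i$ is actually a vertex of $\Gamma^m(L)$, because $x_i\cdot x_j=0$ with the nonzero partner $x_j$ witnesses that $x_i$ is a nonzero zero-divisor.

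Hence $\{x_1,x_2,x_3,\ldots\}$ is an infinite set of pairwise adjacent vertices in $\Gamma^m(L)$, contradicting $\mathrm{Clique}(\Gamma^m(L))<\infty$. I do not anticipate a serious obstacle; the only point that requires a moment's care is checking that the chosen representatives $x_i$ are nonzero and mutually distinct, and both follow immediately from the fact that each $x_i^{*}$ is a proper (prime) element and that distinct maximal annihilators cannot share a representative.
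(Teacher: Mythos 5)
Your proposal is correct and follows essentially the same route as the paper: both invoke Lemma \ref{6.1.} to see that the maximal annihilators are prime and Lemma \ref{6.2.} to get $x_i\cdot x_j=0$ for $i\neq j$, and then conclude that $\{x_i\}$ forms a clique whose size bounds the number of maximal annihilators. The only difference is cosmetic — you phrase it as a contradiction and spell out the nonzeroness, distinctness, and vertex-membership checks that the paper dismisses with ``clearly.''
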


\begin{proof} Let $A=\{x_{i}^{*} \;|\; x_{i}^{*}~ is~maximal\}$ be the set
of all maximal annihilator elements of $L$. Clearly, $x_{i} \neq
0$ for all $i$ and $x_{i}^{*} \neq x_{j}^{*}$ whenever $i \neq j$.
By Lemma \ref{6.1.}, all the elements of $A$ are prime. Then by
Lemma \ref{6.2.}, $x_{i} \cdot x_{j}=0$ for all $i \neq j$. This
shows $Clique(L) \geq |A|$, which according to $Clique(L) <
\infty$ yields the finiteness of $A$.\end{proof}

\begin{lem}\label{6.5.} Let $L$ be reduced 1-compact, compactly generated lattice with $L_{*}$ multiplicatively closed set and  $Clique(\Gamma^m(L))
< \infty$, then $0$ is the meet of a finite number of minimal
prime elements of $L$.\end{lem}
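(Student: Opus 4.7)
The plan is to combine Lemmas~\ref{6.1.}--\ref{6.4.} with a Zorn-style descent to minimal primes. In broad strokes, I first produce a finite family of prime elements whose meet is $0$, and then replace each of them by a minimal prime sitting below it.

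For the first step, by Lemma~\ref{6.4.} the set $A=\{x_1^*,\ldots,x_n^*\}$ of maximal non-$1$ annihilator elements of $L$ is finite, and by Lemma~\ref{6.1.} each $x_i^*$ is a prime element of $L$. The heart of the proof is the claim that $y:=\bigwedge_{i=1}^n x_i^* = 0$. I would argue by contradiction: suppose $y\neq 0$. Since $L$ is $1$-compact and $\cdot$ distributes over arbitrary joins, one checks that $y^*=1$ would force $y=0$, hence $y^*\neq 1$. Lemma~\ref{6.3.} (ACC on annihilators) then gives some $x_j^*\in A$ with $y^*\leq x_j^*$. The identity $x_j\cdot x_j^*=0$ (immediate from the definition of $x_j^*$ and join-distributivity of $\cdot$) combined with $y\leq x_j^*$ yields $x_j\cdot y=0$, so $x_j\leq y^*\leq x_j^*$. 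But then $x_j^2\leq x_j\cdot x_j^*=0$, and reducedness of $L$ forces $x_j=0$, contradicting $x_j^*\neq 1$.

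For the second step, for each $i$ I invoke Zorn's lemma to obtain a minimal prime element $p_i\leq x_i^*$ of $L$. The one nontrivial ingredient is that the meet $q=\bigwedge_\alpha q_\alpha$ of a descending chain of primes is again a (proper) prime: using the compact-element characterization of primeness available in a $1$-compact, compactly generated lattice with $L_*$ multiplicatively closed, for compact $a,b$ with $a\cdot b\leq q$ the index set is covered by $\{\alpha:a\leq q_\alpha\}\cup\{\alpha:b\leq q_\alpha\}$, and cofinality of at least one of these parts forces $a\leq q$ or $b\leq q$; clearly $q\leq q_\alpha<1$, so $q\neq 1$.

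Combining the two steps, $0\leq \bigwedge_{i=1}^n p_i\leq \bigwedge_{i=1}^n x_i^*=0$, so $0$ is the meet of the finitely many minimal primes $p_1,\ldots,p_n$. The principal obstacle is the first step: one must extract a contradiction from the assumption $\bigwedge x_i^*\neq 0$, and the right lever is the reducedness-driven chain $x_j^2\leq x_j\cdot x_j^*=0\Rightarrow x_j=0$, which collides with $x_j^*\neq 1$. Once this key identity is in place, the Zorn descent to minimal primes is routine.
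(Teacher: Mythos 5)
Your proof is correct. The first half is essentially the paper's argument: take the finite set $\{x_1^*,\ldots,x_n^*\}$ of maximal annihilators from Lemma~\ref{6.4.}, note each is prime by Lemma~\ref{6.1.}, and derive a contradiction from a nonzero element below $\bigwedge x_i^*$ via the chain ``each $x_i$ annihilates it, so its annihilator lies above every $x_i$ yet below some maximal $x_j^*$, forcing $x_j^2=0$.'' (You work with $y=\bigwedge x_i^*$ itself where the paper takes an arbitrary nonzero $a\leq\bigwedge x_i^*$, and you explicitly verify $y^*\neq 1$ before invoking Lemma~\ref{6.3.}, a point the paper leaves implicit; these are cosmetic differences.) The second half genuinely diverges. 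The paper does not descend by Zorn to new minimal primes: it proves that the $x_i^*$ \emph{themselves} are minimal prime elements, arguing that if some $p_j=x_j^*$ were not minimal there would be a minimal prime $q<p_j$ with $\bigwedge x_i^*=0\leq q$, so primeness of $q$ forces some $x_i^*\leq q<p_j$, contradicting the pairwise incomparability of maximal annihilators. That sharper identification (maximal annihilators $=$ minimal primes realizing the decomposition) is what the paper leans on later, together with Lemma~\ref{6.6.}, in Theorem~\ref{beck1}. Your Zorn-plus-chain argument buys only the literal statement of the lemma --- a finite family of minimal primes meeting to $0$ --- but it is self-contained and makes explicit the existence of minimal primes below a given prime, which the paper's own argument quietly assumes when it produces $q<p_j$. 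Both routes are valid; the paper's is shorter given its lemmas and yields more information, yours is more conservative about what the minimal primes actually are.
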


\begin{proof} According to Lemma \ref{6.4.}, let $\{x_{i}^{*} \;|\; 1 \leq i \leq
n\}$ be the set of all maximal annihilator elements of $L$. By
Lemma \ref{6.1.}, all these elements are prime. Further, due to
Lemma \ref{6.2.}, $x_{i} \cdot x_{j}=0$ for all $i \neq j$. Assume
that there is $(0 \neq) a \leq \bigwedge_{1 \leq i \leq
n}x_{i}^{*}$. Then $a \cdot x_{i}=0$ for all $i$. Thus $x_{i} \leq
a^{*}$ for all $i$. But by Lemma \ref{6.3.}, $a^{*} \leq
x_{i}^{*}$ for some $i$. However, this gives $x_{i} \leq a^{*}
\leq x_{i}^{*}$, that is, $x_{i}^{2}=0$, a contradiction to
reduced lattice. Thus we have, $0=\bigwedge_{1 \leq i \leq
n}x_{i}^{*}$.

     Now, we show that $x_{i}^{*}=p_{i}$ are minimal prime elements
of $L$. Since $p_{i}$ are assumed to be maximal annihilator
elements, we may suppose that none of $p_{i}$ contains $p_{j}$ for
all $i \neq j$. Indeed, if $p_{j}$ were not minimal for some $j$,
then there exists a minimal prime element $q$ with $q < p_{j}$.
Now $\bigwedge_{i=1}^{n}x_{i}^{*}\leq q$ and $q$ is prime implies
that $x_{i}^{*}\leq q$ for some $i$. But then $x_{i}^{*}=p_{i}\leq
q\leq p_{j}$, a contradiction. Thus $\bigwedge^{n}_{i = 1}
p_{i}=0$.

\end{proof}

\begin{lem}\label{6.6.} Let $L$ be reduced 1-compact, compactly generated lattice
with with $L_{*}$ multiplicatively closed set and
$Clique(\Gamma^m(L)) < \infty$, then every minimal prime element
$p$ of $L$ is of the form $x^{*}$ for some $x \in L$.\end{lem}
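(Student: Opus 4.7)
The plan is to show that the finite family $\{x_1^*, \ldots, x_n^*\}$ of maximal annihilator elements produced by Lemmas~\ref{6.4.} and~\ref{6.5.} already exhausts the minimal prime elements of $L$, so any given minimal prime $p$ must coincide with some $x_i^*$.

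The first step will be to invoke the identity $\bigwedge_{i=1}^n x_i^* = 0$ established in the proof of Lemma~\ref{6.5.}. Combined with the multiplicative-lattice axiom $a\cdot b \leq a \wedge b$ from Definition~\ref{1.1}, applied iteratively via associativity of the product, this yields
\[
x_1^* \cdot x_2^* \cdots x_n^* \;\leq\; \bigwedge_{i=1}^n x_i^* \;=\; 0 \;\leq\; p.
\]

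Next, I would apply the primeness of $p$ inductively in the number of factors. Writing the product as $x_1^* \cdot (x_2^* \cdots x_n^*) \leq p$ and using the defining property $a \cdot b \leq p \Rightarrow a \leq p$ or $b \leq p$, a straightforward induction on $n$ supplies an index $i$ with $x_i^* \leq p$.

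Finally, Lemma~\ref{6.1.} guarantees that each $x_i^*$ is itself a prime element of $L$. Since $x_i^* \leq p$ and $p$ is a minimal prime, we must have $x_i^* = p$, for otherwise $x_i^*$ would be a prime strictly below $p$, contradicting the minimality of $p$. Hence $p = x_i^* = x^*$ with $x := x_i$, which completes the argument. The only delicate passage is the translation from the meet $\bigwedge x_i^*=0$ to the product $x_1^*\cdots x_n^*$ in the opening display, but this is immediate from axiom $(4)$ of Definition~\ref{1.1}, so I do not anticipate any substantial obstacle beyond careful bookkeeping in the inductive step.
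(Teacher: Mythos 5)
Your argument is correct, but it takes a genuinely different route from the paper's. The paper works locally at the given minimal prime $p$: it forms the set $A=\{x^{*}\mid x\nleq p\}$, uses the ascending chain condition of Lemma \ref{6.3.} to produce maximal elements of $A$, shows (via primeness of $p$: $y_{1},y_{2}\nleq p$ forces $y_{1}\cdot y_{2}\nleq p$) that $A$ in fact has a greatest element $z^{*}$, then argues by a compactness-and-reducedness argument that $z^{*}\leq p$, and concludes $p=z^{*}$ from minimality. You instead argue globally: you import the decomposition $0=\bigwedge_{i=1}^{n}x_{i}^{*}$ from Lemma \ref{6.5.}, pass from the meet to the product via axiom $(4)$ of Definition \ref{1.1}, apply primeness of $p$ inductively to extract an index $i$ with $x_{i}^{*}\leq p$, and finish with Lemma \ref{6.1.} and minimality. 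Both are sound; note that your meet-to-product detour is not even necessary, since $a\wedge b\leq p$ already forces $a\leq p$ or $b\leq p$ for a prime $p$ (because $a\cdot b\leq a\wedge b$), and the paper itself uses exactly this step inside the proof of Lemma \ref{6.5.} to show the $x_{i}^{*}$ are minimal. Your route is shorter, avoids the delicate point in the paper's proof where Lemma \ref{6.1.} is invoked for an element that is only maximal within $A$ rather than among all annihilators, and yields the stronger conclusion that $Min(L)$ is exactly $\{x_{1}^{*},\dots,x_{n}^{*}\}$; its only cost is the dependence on Lemma \ref{6.5.}, and hence on the hypothesis that $L_{*}$ is multiplicatively closed, which is in any case part of the statement of Lemma \ref{6.6.}.
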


\begin{proof} Let $p$ be a minimal prime element of $L$. For $x \nleq p$, we
have $x^{*} \leq p$. By Lemma \ref{6.3.}, there are maximal
annihilator elements among $A=\{x^{*} \;|\; x \nleq p\}$. In fact,
we prove that there is a greatest one. Let $y_{1}^{*}, y_{2}^{*}$
be two maximal elements of $A$. We have $y_{1} \cdot y_{2} \nleq
p$, since $p$ is prime and $y_{1}, y_{2} \nleq p$. Thus there is
$(0 \neq y)=(y_{1} \cdot y_{2})$ with $y \nleq p$. Clearly,
$y_{1}^{*}, y_{2}^{*} \leq y^{*}$ and as both $y_{1}^{*},
y_{2}^{*}$ are maximal in $A$, we conclude that $y_{1}^{*}=
y_{2}^{*}=y^{*}$. This shows that $A$ has the greatest element say
$z^{*}$.

       By Lemma \ref{6.1.}, $z^{*}$ is prime. We prove that
$z^{*} \leq p$. If not then there is a compact element $g \leq
z^{*}$ such that $g \nleq p$. Then $z\leq z^{**} \leq g^{*} \in
A$. Hence we have $z \leq g^{*}\leq z^{*}$, a contradiction to the
fact that $L$ is reduced. Thus $z^{*} \leq p$ and $p$ is a minimal
prime element of $L$, we have $p=z^{*}$.

\end{proof}

   Now, we prove Beck's conjecture for reduced lattice $L$.

\begin{thm}\label{beck1} Let $L$ be a reduced 1-compact, compactly generated  lattice
with with $L_{*}$ multiplicatively closed set and
$Clique(\Gamma^m(L)) < \infty$. Then the number of minimal prime
elements of $L$ is finite, say $n$ and
$\chi(\Gamma^m(L))=Clique(\Gamma^m(L))=n$\end{thm}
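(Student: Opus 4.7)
The plan is to stitch together Lemmas \ref{6.1.}--\ref{6.6.} to identify the minimal prime elements with maximal annihilators, and then exhibit a clique of size $n$ and a proper $n$-coloring that match.

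First I would extract the structural setup. By Lemma \ref{6.4.}, the set $A=\{x_1^\ast,\dots,x_n^\ast\}$ of maximal annihilator elements is finite; by Lemma \ref{6.1.} these are prime, and by Lemma \ref{6.5.} they are exactly the $p_i$ with $0=\bigwedge_{i=1}^n p_i$. I would verify that these are all of the minimal prime elements: if $p$ is any minimal prime element, Lemma \ref{6.6.} gives $p=z^\ast$ for some $z$, and then $\bigwedge_{i=1}^n p_i = 0 \leq p$ combined with primality of $p$ forces some $p_i\leq p$; since $p_i$ is itself a minimal prime element by Lemma \ref{6.5.}, we get $p_i=p$. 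So the minimal prime elements are precisely $p_1,\ldots,p_n$.

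Next I would construct an $n$-clique. Choose $x_i\neq 0$ with $p_i=x_i^\ast$. By Lemma \ref{6.2.}, $x_i\cdot x_j=0$ for $i\neq j$. The $x_i$ are pairwise distinct (else their annihilators would coincide, contradicting $p_i\neq p_j$) and each is a vertex of $\Gamma^m(L)$ since $x_i\cdot x_j=0$ with $x_j\neq 0$. Thus $\{x_1,\dots,x_n\}$ is a clique in $\Gamma^m(L)$, giving $Clique(\Gamma^m(L))\geq n$.

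For the chromatic bound I would define a coloring $c\colon V(\Gamma^m(L))\to\{1,\dots,n\}$ by $c(v)=\min\{i:v\nleq p_i\}$. This is well defined: any vertex $v$ satisfies $v\neq 0=\bigwedge p_i$, so $v\nleq p_i$ for some $i$. If $u,v$ are adjacent, then $u\cdot v=0\leq p_i$ for every $i$, and primality of each $p_i$ forces $u\leq p_i$ or $v\leq p_i$. Hence if $c(u)=c(v)=i$, both $u,v\nleq p_i$, contradicting primality. So $c$ is proper and $\chi(\Gamma^m(L))\leq n$. Combining with the always-true inequality $Clique(\Gamma^m(L))\leq\chi(\Gamma^m(L))$ yields $\chi(\Gamma^m(L))=Clique(\Gamma^m(L))=n$. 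There is no real obstacle here beyond assembling the pieces—the genuine work was already done in Lemmas \ref{6.1.}--\ref{6.6.}, and the only subtlety worth double-checking is that every minimal prime element is among the $p_i$, which is where the combination of Lemmas \ref{6.5.} and \ref{6.6.} is essential.
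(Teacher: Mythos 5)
Your proposal is correct and follows essentially the same route as the paper: it assembles Lemmas \ref{6.1.}--\ref{6.6.} to get $0=\bigwedge_{i=1}^n p_i$ with $p_i=x_i^\ast$, uses Lemma \ref{6.2.} to produce the $n$-clique, and defines the identical greedy coloring $v\mapsto\min\{i: v\nleq p_i\}$ with the same primality argument for properness. Your explicit check that every minimal prime element occurs among the $p_i$ (which the paper leaves implicit) is a welcome bit of extra care, though it already follows from $\bigwedge p_i=0\leq p$ and minimality without invoking Lemma \ref{6.6.}.
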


\begin{proof} By the Lemma \ref{6.5.}, we have $0=\wedge_{1 \leq i \leq
n}p_{i}$ for $p_{i}$ being minimal prime elements of $L$ and by
Lemma \ref{6.6.}, $p_{i}=x_{i}^{*}$ for some $x_{i} \neq 0$, hence
$0=\wedge_{1 \leq i \leq n}x_{i}^{*}$. By Lemma \ref{6.2.},
$\{x_{i} \;|\; 1 \leq i \leq n\}$ is a clique in $L$ and thus
$Clique(L) \geq n$. Define a coloring $\mathscr{f}$ of $L$ as
$f(x)=min\{i \;|\; x \nleq p_{i}\}$. If $x, y$ are adjacent
vertices, then $x \cdot y=0$. If $f(x)=k+1$, then $x \leq p_{i}$
for $1 \leq i \leq k$ and $x \nleq p_{k+1}$. So we conclude that
$y \leq p_{k+1}$, since $x \cdot y =0 \leq p_{k+1}$ and $x \nleq
p_{k+1}$. This show that $f(y) \neq k+1$ and thus $f(x) \neq
f(y)$. Hence $f$ is coloring of $L$. This yields
$\chi(\Gamma^m(L)) \leq n$, and finally $ n \leq
Clique(\Gamma^m(L)) \leq \chi(\Gamma^m(L)) \leq n$. In conclusion,
we have $\chi(\Gamma^m(L))=Clique(\Gamma^m(L))=n$.\end{proof}

\begin{cor} Let $R$ be a reduced commutative ring with unity such that $Clique(\mathbb{AG}(R))< \infty$.
Then $\chi(\mathbb{AG}(R))= Clique(\mathbb{AG}(R))=|Min(R)|$.
\end{cor}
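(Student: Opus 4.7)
The plan is to exhibit this corollary as a direct specialization of Theorem \ref{beck1} to the multiplicative lattice $L = Id(R)$ of ideals of $R$, using the dictionary set up in Remark \ref{red} and Remark \ref{red1}. Concretely, I will verify each hypothesis of Theorem \ref{beck1} for $Id(R)$ and then translate the conclusion back to ring-theoretic language.

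First I would recall from Remark \ref{red} (citing Anderson \cite{ddand}) that for any commutative ring $R$ with unity, $Id(R)$ is a $1$-compact, compactly generated, modular multiplicative lattice, where multiplication is the usual product of ideals, join is the sum, meet is intersection, and $1 = R$. The compact elements $L_{*}$ are precisely the finitely generated ideals of $R$, and since the product of two finitely generated ideals is again finitely generated, $L_{*}$ is closed under the multiplication of $Id(R)$; hence $L_{*}$ is a multiplicatively closed set. Next, because $R$ is reduced, a straightforward check shows that $I^n = 0$ in $Id(R)$ forces $I = 0$, so $Id(R)$ is a reduced multiplicative lattice.

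Second, I would identify $\mathbb{AG}(R)$ with $\Gamma^m(Id(R))$: by the definition of the annihilating ideal-graph and of $\Gamma^m$, the vertex sets coincide (nonzero ideals with nonzero annihilator ideal) and the adjacency relations coincide (two distinct vertices are adjacent iff their product is the zero ideal). Under this isomorphism, the hypothesis $Clique(\mathbb{AG}(R)) < \infty$ becomes $Clique(\Gamma^m(Id(R))) < \infty$. All the hypotheses of Theorem \ref{beck1} are therefore satisfied for $L = Id(R)$, and that theorem delivers a finite number $n$ of minimal prime elements of $Id(R)$ together with the equality
\[
\chi(\Gamma^m(Id(R))) \;=\; Clique(\Gamma^m(Id(R))) \;=\; n.
\]

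Finally, I would translate $n$ into $|\mathrm{Min}(R)|$. By the observation in Remark \ref{red1}, the prime elements of $Id(R)$ are exactly the prime ideals of $R$, and consequently the minimal prime elements of $Id(R)$ are exactly the minimal prime ideals of $R$; thus $n = |\mathrm{Min}(R)|$. Combining this with the preceding display gives $\chi(\mathbb{AG}(R)) = Clique(\mathbb{AG}(R)) = |\mathrm{Min}(R)|$, as required. The only genuine step that needs attention is the verification that $L_{*}$ is multiplicatively closed in $Id(R)$ (the product of finitely generated ideals is finitely generated), since the other items are either standard or immediate from the definitions; everything else is a routine transcription between $R$ and $Id(R)$.
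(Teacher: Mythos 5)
Your proposal is correct and is exactly the derivation the paper intends: the corollary is stated without proof as an immediate specialization of Theorem \ref{beck1} to $L = Id(R)$, with the dictionary (reducedness of $Id(R)$, identification of $\mathbb{AG}(R)$ with $\Gamma^m(Id(R))$, compact elements as finitely generated ideals with multiplicatively closed $L_{*}$, and minimal prime elements as minimal prime ideals) supplied by Remark \ref{red} and Remark \ref{red1}. Your explicit verification of these hypotheses, in particular that the product of finitely generated ideals is finitely generated, just fills in details the paper leaves implicit.
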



\begin{thebibliography}{99}
\bibitem{ali} G. Aalipour, S. Akbari, R. Nikandish, M.J. Nikmehrb and F. Shaveisi, \textit{On the
coloring of the annihilating-ideal graph of a commutative ring},
Discrete Math. {\bf 312} (2012), 2620-2626.

\bibitem{ddand} D. D. Anderson, \textit{Abstract commutative ideal theory without chain
condition}, Algebra Universalis {\bf 6} (1976), 131-145.

\bibitem{AN} D. D. Anderson and M. Naseer,  {\it Beck's Coloring of a Commutative Ring},
J. Algebra {\bf 159} (1993), 500-514.
\bibitem{AL} D. F. Anderson  and P. Livingstone, {\it The zero-divisor graph of a commutative
ring}, J. Algebra {\bf 217}(1999), 434-447.




\bibitem{B} I. Beck,  {\it Coloring of a commutative ring},
 J. Algebra {\bf 116} (1988), 208-226.



\bibitem{br1} M. Behboodi and Z. Rakeei, \textit{The annihilating-ideal graph of commutative rings
I}, J. Algebra Appl. {\bf 10(4)} (2011), 727-739.

\bibitem{br2} M. Behboodi and Z. Rakeei, \textit{The annihilating-ideal graph of commutative rings
II}, J. Algebra Appl. {\bf 10(4)} (2011), 741-753.

\bibitem{DMS} F. DeMeyer, T. McKenzie  and K. Schneider,
{\it The zero-divisor graph of a commutative semigroup}, Semigroup
Forum {\bf 65} (2002), 206-214.



\bibitem{Gg} G. Gr\"atzer,  {\it General Lattice Theory}, Birkhauser, Basel (1998).

\bibitem{H} F. Harary,  {\it Graph Theory}, Narosa, New Delhi (1988).

\bibitem{hj} R. Hala\v {s} and M. Jukl, {\it On Beck's coloring
of posets}, Discrete Math. {\bf 309} (2009), 4584-4589.

\bibitem{hl}  {R. Hala\v {s}} and H. L\"anger, {\it The zero
divisor graph of a qoset}, Order DOI 10.1007/s11083-009-9120-1.



\bibitem{j} V. V. Joshi, {\it Zero divisor graph of a poset with respect to an ideal}, Order {\bf 29} (2012), 499-506.

\bibitem{ja} V. V. Joshi and A. U. Khiste, {\it On the zero divisor graphs of pm-lattices}, Discrete Math. {\bf 312} (2012), 2076-2082.

\bibitem{jaa} V. V. Joshi and A. U. Khiste, {\it On the zero divisor graph of a Boolean poset},  Math. Slovaca (to appear).

\bibitem{jm} V. V. Joshi and N. D. Mundlik, \textit{Prime ideals in 0-distributive
posts}, Cent. Eur. J. Math. DOI 10.2478/s11533-013-0206-z.


\bibitem{jw} V. V. Joshi and B. N. Waphare, \textit{Characterizations of 0-distributive posets}, Math. Bohem. {\bf (1)130} (2005), 73-80.
\bibitem{jwp} V. V. Joshi, B. N. Waphare and H. Y. Pourali {\it Zero divisor graphs of lattices and primal ideals}, Asian-European J. Math. {\bf 5}(3) (2012), 1250037 (9 pages).

\bibitem{jwp1} V. V. Joshi, B. N. Waphare and H. Y. Pourali {\it On generalized  zero divisor graphs of posets}, Discrete
Appl. Math. {\bf 161} (2013), 1490-1495.













\bibitem{mpy} H. R. Maimani, M. R. Pournaki and S. Yassemi, {\it
Zero-divisor graphs with respect to an ideal}, Comm. Algebra {\bf
34} (2006), 923-929.

\bibitem{nwl} S. K. Nimbhorkar, M. P. Wasadikar, Lisa DeMeyer,
{\it Coloring of semilattices}, Ars Comb. {\bf 12} (2007), 97-104.


\bibitem{red} S. P. Redmond, {\it An ideal based zero-divisor
graph of a commutative ring}, Comm. Algebra {\bf 31} (2003),
4425-4423.

\bibitem{sam} K. Samei, {\it The zero-divisor graph of a reduced ring
},  J. Pure and Appl. Algebra, {\bf 209} (2007), 813-821 .




\bibitem{var}J. Varlet, \textit{A generalization of notion of pseudo-complementness},
Bull. Soc. Roy. Sci. Li\'{e}ge {\bf 36} (1968), 149-158.






\end{thebibliography}
\end{document}